\documentclass[11pt,reqno]{amsart}     

\usepackage{euscript,amssymb,amsbsy,mathabx}
\usepackage[arrow,matrix,curve]{xy}
\usepackage{a4wide,graphicx,longtable}

\newtheorem{stuff}{Stuff}[section]
\newtheorem{theorem}[stuff]{\bf Theorem}

\newtheorem{lemma}[stuff]{\bf Lemma}
\newtheorem{corollary}[stuff]{\bf Corollary}

\newenvironment{definition}{%
\vskip1ex\refstepcounter{stuff}\trivlist \itemindent 0pt
\item[\hskip\labelsep\bf Definition \thestuff.]%
\ignorespaces}{\endtrivlist\vskip1ex}%
%
%

%%%%%%%%%%% 
% ARROWS  % 
%%%%%%%%%%% 

\font\tenmsa=msam10 % 
\newcommand\hdashpiece{% 
{\vrule height2.75pt depth-2.35pt width2.3pt \kern1.7pt}}% 
\newcommand\hdashpieces{% 
{\hdashpiece\hdashpiece\hdashpiece\hdashpiece}}% 
\newcommand\dashto{\mathrel{% 
\hdashpiece\hdashpiece\kern-0.4pt\hbox{\tenmsa K}}}% 
\newcommand\dashar{\mathrel{% 
\hdashpieces\kern-0.4pt\hbox{\tenmsa K}}}% 
 
%%%%%%%%%%%%%%%%%%%%%%% 
%Fonts, special fonts % 
%%%%%%%%%%%%%%%%%%%%%%% 
 
\let\euf\EuScript %use of package ``euscript'' required 
\let\cal\mathcal 
\let\mbb\mathbb

\let\bsymb\boldsymbol %needed for making bold greek,  
                      %caligraphic letters etc 
\DeclareFontFamily{OT1}{rsfs}{} 
\DeclareFontShape{OT1}{rsfs}{n}{it}{<->rsfs10}{} 
\DeclareMathAlphabet{\crl}{OT1}{rsfs}{n}{it} 
 
%%%%%%%%%%%%%%% 
% Miscellanea % 
%%%%%%%%%%%%%%% 

\let\tld\tilde 
\let\wtld\widetilde 
 
\let\nit\noindent

\let\srel\stackrel 
\let\vphi\varphi 
 
\let\veps\varepsilon

\newcommand\ra{{a}}
\newcommand\rd{{\rm d}} 
 
\newcommand\ev{{\rm ev}}

\newcommand\Pic{\mathop{\rm Pic}\nolimits} 
 
\newcommand\Ker{{\rm Ker}} 
 
\newcommand\invq{{\slash\kern-0.65ex\slash}}

\let\d\delta

\let\si\sigma

\let\les\leqslant 
\let\ges\geqslant

%%%%%%%%%%%%%%% 
% Bbb letters % 
%%%%%%%%%%%%%%% 

\newcommand\bone{{1\kern-0.57ex\rm l}} 
 
%%%%%%%%%%%%%% 
% bf letters % 
%%%%%%%%%%%%%% 

\newcommand\cA{{\crl A}}

\newcommand\Sym{\mathop{\rm Sym}\nolimits}

\newcommand\mx{{\rm max}}

\newcommand{\tS}{{\tilde S}} 
\newcommand{\res}{{\rm res}} 
\newcommand{\ouset}[3]{\underset{#1}{\overset{#2}{#3}}}

\newcommand\mlt{{\rm mult}}
\newcommand{\PP}{{P}}

%%%%%%%%%%%%%% User specified LaTeX commands. 

\author{Mihai Halic}
\keywords{$K3$ surfaces; nodal curves; Wahl map; Seshadri constants}
\subjclass[2010]{14H10; 14J28; 14D15; 14C20} 
\address{
\begin{minipage}{30em}
\end{minipage}
}
%\email{mihai.halic@gmail.com}

\begin{document} 
 
 \title{About the surjectivity of the Wahl map\\ of nodal curves on $K3$ surfaces\\ 
Corrigendum to:\\ Modular properties of nodal curves on \mbox{$K3$} surfaces} 

\begin{abstract}
I correct an error in \cite{ha} concerning the non-surjectivity of the Wahl map of 
nodal curves on $K3$ surfaces. 
I also obtain a lower bound of independent interest for the multiple point Seshadri 
constants of general $K3$ surfaces. 
\end{abstract}

\maketitle
 
\markboth{\sc Mihai Halic}% 
{\sc Corrigendum: Modular properties of nodal curves on $K3$ surfaces} 

\section{The correction}

The goal of this note is twofold: 
\begin{enumerate}
	\item First, the proof of \cite[Theorem 3.1]{ha} is incorrect: 
	the fault is at the Step 2 of the proof. 
	In the meantime, the result has been proved in \cite{kem} with better bounds. 
	\item Second, I correct \cite[Theorem 4.2]{ha}. At pp. 884, the last row of the 
	diagram A.1 should be tensored by $\cal O_E(-2E)$. This error affects the 
	subsequent computations from Lemma A.2 onward, which are used in the proof 
	of the Theorem. 
\end{enumerate}
Recall that $\hat C\in|\cal L=\cA^d|$ is a nodal curve with nodes 
$\euf N:=\{\hat x_1,\ldots,\hat x_\delta\}$ on the polarized $K3$ surface $(S,\cA)$, 
such that $\cA\in\Pic(S)$ is not divisible, and $\cA^2=2(n-1)$. 
(The paper \cite{ha} deals only with $K3$ surfaces with cyclic Picard group). 
Let $\si:\tld S\to S$ be the blow-up of $S$ at $\euf N$, and denote by 
$E^\ra$, $\ra =1,\ldots,\delta$, the exceptional divisors, and $E:=E^1+\ldots+E^\delta$. 
The normalization $C$ of $\hat C$ fits into 
\begin{equation}\label{eq:C}
\scalebox{.8}{$
\xymatrix@R=1.5em@C=5em{ 
(C,\Delta)\;\ar[d]_-\nu\ar@{^{(}->}[r]^{\tilde u}\ar[dr]^-u& 
\tilde S\ar[d]^-\si 
\\ 
(\hat C,\euf N)\;\ar@{^{(}->}[r]^{j}& 
S, 
} 
$}
\end{equation}
$\tilde u$ is an embedding, and $K_C=\si^*\cal L(-E)\otimes\cal O_C$. 
The curve $C$ carries the divisor 
\\ \centerline{
$\Delta:=x_{1,1}+x_{1,2}+\ldots+x_{\d,1}+x_{\d,2}$,
} 
where $\{x_{\ra ,1},x_{\ra ,2}\}=E^\ra \cap C$ is the pre-image of 
$\hat x_a\in\hat C$ by $\nu$. 

In general, if $V'$ is a subscheme of some variety $V$, $\cal I_V(V')$ or $\cal I(V')$ 
stands for its sheaf of ideals, and $DV'\subset V\times V$ denotes the diagonally 
embedded $V'$. 

Let $(X,\Delta_X)$ be an arbitrary smooth, irreducible curve together with $\delta$ 
pairwise disjoint pairs of points 
$
\Delta_X=\{\{x_{1,1},x_{1,2}\},\ldots,\{x_{\delta,1},x_{\delta,2}\}\}\subset X.
$ 
The exact sequence $0\to\cal I(DX)^2\to\cal I(DX)\to K_X\to 0$ yields 
the Wahl map  \\[.5ex]
\centerline{
$w_X:H^0(X\times X,\cal I(DX)\otimes K_{X\times X}){\to} H^0(X,K_X^3).$ 
}\\[.5ex]
The vector space $H^0(\cal I(DX)\otimes K_{X\times X})$ splits into \\[-.5ex]
\centerline{
$H^0(\cal I(DX)\otimes K_{X\times X})\cap\Sym^2H^0(K_X) 
\oplus\overset{2}\bigwedge H^0(K_X),$
}\\[.5ex] 
and $w_X$ vanishes on the first direct summand, as it is skew-symmetric. 
Denote 
\begin{equation}\label{eq:Pi}
\PP_{\Delta_X}:=\mbox{$\ouset{a=1}{\delta}{\bigcup}$}
\{x_{a,1},x_{a,2}\}\times\{x_{a,1},x_{a,2}\}\subset X\times X,
\end{equation} 
and let $w_{X,\Delta_X}$ be the restriction of $w_X$ to 
$H^0\big(\cal I(\PP_{\Delta_X})\cdot\cal I(DX)\otimes K_{X\times X}\big)
\cap\overset{2}{\bigwedge}H^0(K_X)$. 
(Thus $w_{X,\Delta_X}$ is a punctual modification of the usual Wahl map.) 
With these notations, we replace \cite[Theorem 4.2]{ha} by the following. 

\begin{theorem}\label{thm:main} 
\nit{\rm (i)} Let $(S,\cA)$, $\cA^2\ges6$, be as above. 
Consider a nodal curve $\hat C\,{\in}\,|d\cA|$ with 
\begin{equation}\label{eq:dta}
\mbox{$\delta\les\min\bigl\{\frac{d^2\cA^2}{3(d+4)},\delta_\mx(n,d)\bigr\}$}
\end{equation} 
nodes and let $(C,\Delta)$ be as above ($\delta_\mx(n,d)$ is defined 
in \cite[pp. 872]{ha};  the minimum is the first expression, except a finite 
number of cases). Then the homomorphism $w_{C,\Delta}$ is not surjective.  

\nit{\rm (ii)} For generic a generic curve $X$ of genus $g\ges 12$ with generic 
markings $\Delta_X$, such that $\delta\les\frac{g-1}{2}$, the homomorphism 
$w_{X,\Delta_X}$ is surjective.
\end{theorem}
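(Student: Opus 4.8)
\emph{Plan of proof of part {\rm (ii)}.}
The plan is to fix the target, reduce the surjectivity of $w_{X,\Delta_X}$ to a transversality property of $\Ker w_X$, and then verify that property by specializing $(X,\Delta_X)$ to a curve on a rational surface. First the target: from the exact sequence underlying $w_{X,\Delta_X}$,
\[
0\lar\cal I(DX)^2\cdot\cal I(\PP^{\mathrm{off}}_{\Delta_X})\otimes K_{X\times X}\lar\cal I(\PP_{\Delta_X})\cdot\cal I(DX)\otimes K_{X\times X}\lar K_X^3(-\Delta_X)\lar 0,
\]
in which $\PP^{\mathrm{off}}_{\Delta_X}$ is the set of the $2\delta$ off-diagonal points of $\PP_{\Delta_X}$, one reads off that the image of $w_{X,\Delta_X}$ lies in $H^0(K_X^3(-\Delta_X))$; so ``$w_{X,\Delta_X}$ surjective'' must be understood as surjective onto that space. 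Since surjectivity onto $H^0(K_X^3(-\Delta_X))$ is an open condition in any flat family of smooth pointed curves along which $h^0(K^3(-\Delta))$, $h^0(K)$ and $\dim\bigwedge^2H^0(K)$ stay constant — which holds once the markings stay reduced and impose independent conditions on $|K_X^3|$ and $|K_X|$ — it suffices to exhibit one pair $(X_0,\Delta_0)$, with $X_0$ smooth of genus $g$ and $\Delta_0$ a general divisor of $2\delta$ points, for which $w_{X_0,\Delta_0}$ is surjective; and, with the usual care about semicontinuity of cohomology on the special fibre, $X_0$ may even be allowed to degenerate.

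Comparing this sequence with the one defining the ordinary Wahl map, $0\to\cal I(DX)^2\otimes K_{X\times X}\to\cal I(DX)\otimes K_{X\times X}\to K_X^3\to 0$, yields a commuting square of connecting homomorphisms. The generic curve $X$ of genus $g\ges 12$ has $w_X$ surjective by the theorem of Ciliberto, Harris and Miranda — equivalently $w_X|_{\bigwedge^2H^0(K_X)}$ is surjective, since $w_X$ vanishes on $\Sym^2H^0(K_X)$ — with kernel of dimension $\binom{g}{2}-(5g-5)=\frac{(g-1)(g-10)}{2}$. Plugging this into the square, the surjectivity of $w_{X,\Delta_X}$ becomes equivalent to the single statement that $\ev(\Ker w_X)=\ev(V)$, where $V:=(w_X|_{\bigwedge^2H^0(K_X)})^{-1}(H^0(K_X^3(-\Delta_X)))$ and $\ev\colon\bigwedge^2H^0(K_X)\lar\bigoplus_{a=1}^{\delta}(K_X)_{x_{a,1}}\otimes(K_X)_{x_{a,2}}$ is evaluation at the off-diagonal points of $\PP_{\Delta_X}$. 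Concretely: an obstruction is a class $\rho\in H^0(K_X^3(-\Delta_X))$ which, after being lifted (by surjectivity of $w_X$) to some $\sigma\in\bigwedge^2H^0(K_X)$, cannot be adjusted by a $k\in\Ker w_X$ so that $\sigma+k$ vanishes at all the points $(x_{a,1},x_{a,2})$; such an obstruction is absent exactly when $\ev(\Ker w_X)=\ev(V)$, in particular whenever $\ev|_{\Ker w_X}$ is itself onto. The $2\delta$ ``diagonal'' conditions attached to $\PP_{\Delta_X}$ cost nothing here, precisely because $w_X$ is already surjective onto all of $H^0(K_X^3)$.

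The step I expect to be the real obstacle is establishing this transversality, because $\Ker w_X$ has no workable closed description. Heuristically one would expect it to hold for $\delta$ as large as $\frac{(g-1)(g-10)}{2}$ — the system $\Ker w_X\subset|K_X\boxtimes K_X|$ is large and, off $DX$, should be positive enough that $\delta$ general points impose independent conditions on it; but the lack of a closed form for $\Ker w_X$ forces one to verify the statement by specialization. The hypothesis $\delta\les\frac{g-1}{2}$ is then the bound that the specialization delivers: it is precisely the condition that $K_X(-\Delta_X)$ be effective — of dimension $g-1-2\delta$ — for the generic $\Delta_X$, the same numerical inequality governing part {\rm (i)}, where $K_C(-\Delta)\cong N_{C/\tld S}=\cal O_{\tld S}(C)\otimes\cal O_C$, and it is exactly the positivity one wants available in the specialized model.

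To carry out the specialization I would realize $(X_0,\Delta_0)$ geometrically: degenerate $(X,\Delta_X)$ to a smooth curve $C_0$ of genus $g$ on a rational surface $Y$ — an iterated blow-up of $\bbP^2$ or of $\bbP^1\times\bbP^1$ — with the $\delta$ pairs of marked points cut out on $C_0$ by $\delta$ of the exceptional curves, in imitation of the $K3$ picture of part {\rm (i)} but with $Y$ chosen positive enough. The Gaussian-map formalism for a curve on a surface then presents the cokernel of $w_{C_0,\Delta_0}$ as a subquotient of $H^1$ of an explicit line bundle on $Y$ built from $\cal O_Y(C_0)$ and the conormal data of those exceptional curves; since $Y$ is rational and, once $\delta\les\frac{g-1}{2}$, that bundle is big and nef, Kawamata--Viehweg vanishing kills this $H^1$, giving the surjectivity of $w_{C_0,\Delta_0}$ and hence, by the semicontinuity set up in the first paragraph, of $w_{X,\Delta_X}$ for the generic pair. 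The delicate part will be arranging $Y$, the linear system of $C_0$ and the positions of the exceptional curves so as to secure simultaneously the prescribed genus $g$, the generality of $\Delta_0$ on $C_0$, and the numerical positivity required for Kawamata--Viehweg in exactly the range $\delta\les\frac{g-1}{2}$ — which is the mirror image of the non-vanishing computed on the blown-up $K3$ surface $\tld S$ in part {\rm (i)}.
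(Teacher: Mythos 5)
Your reduction of part (ii) is essentially the paper's: you correctly identify the target as $H^0(K_X^3(-\Delta_X))$, use the Ciliberto--Harris--Miranda surjectivity of $w_X$ to make the restricted map onto that target surjective (the diagonal conditions at $\Delta_X$ being absorbed for exactly the reason you give), and reduce everything to whether the evaluation at the $\delta$ off-diagonal points, restricted to $G=\Ker w_X\cap\bigwedge^2H^0(K_X)$, covers the image of the evaluation of the full preimage $V$. Up to that point you reproduce the diagram chase through \eqref{eq:X}.

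The gap is that you never prove this transversality, and the route you sketch for it is both unexecuted and unnecessary. You call it ``the real obstacle'', propose degenerating $(X,\Delta_X)$ to a curve on a rational surface and invoking Kawamata--Viehweg on an unspecified line bundle, and then concede that the delicate part is still to be arranged --- so the argument stops exactly at its crux. The paper settles the step with a short genericity argument requiring no degeneration: since $\dim G=\frac{(g-1)(g-10)}{2}\ges\delta$, one considers $h_\Xi:\bigwedge^{\delta}G\to\bigwedge^{\delta}H_\Xi$ and shows $\bigcap_{\Delta_X}\Ker(h_\Xi)=0$, because a nonzero element of $\bigwedge^{\delta}G\subset H^0\big((X^2)^{\delta},K_{X\times X}\boxtimes\ldots\boxtimes K_{X\times X}\big)$ is a nonzero holomorphic section and hence does not vanish at every tuple of markings; so for generic $\Delta_X$ the evaluation $\ev_\Xi''|_G$ is onto all of $H_\Xi$, which is more than enough. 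Your reading of the hypothesis $\delta\les\frac{g-1}{2}$ is also off: in the paper it guarantees that the $2\delta$ generic marked points impose independent conditions on $|K_X|$, so that $\ev_\Xi$ on $\bigwedge^2H^0(K_X)$ is surjective; it is not a positivity threshold for a vanishing theorem on a rational surface. Finally, your proposal addresses only part (ii); part (i) --- the non-surjectivity on $K3$ surfaces, which is the substance of the theorem and rests on the comparison of $w_{C,\Delta}$ with $w_{\tld S}$, the Seshadri-constant bound of Section \ref{sct:seshadri}, and the specialization to $\Pic(S)=\mbb Z\cA$ --- is not touched at all.
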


This is a non-surjectivity property for {\em the pair} $(C,\Delta)$, rather than 
for $C$ itself. I conclude the note (see Section \ref{sct:final}) with some evidence 
towards  the non-surjectivity of the Wahl map $w_C$ itself, and comment on 
related work in \cite{kem}.

%%%%%%%%%%%%%%%%%%%%%%%%%%%%%

\section{Relationship between the Wahl maps of \mbox{$C$} and \mbox{$\tS$}}
\label{sct:CS}

\begin{lemma}\label{lm:LD}
{\rm (i)} The following diagram has exact rows and columns: 
\begin{equation}\label{eq:DD}
\scalebox{.8}{$
\xymatrix@R=1.5em@C=2em{
0\ar[r]
&
\cal I_{C\times C}(DC)^2
\ar[r]\ar@{=}[d]
&
\cal I_{C\times C}(D\Delta)\cdot\cal I_{C\times C}(DC)
\ar[r]^-{w'_{C}}\ar@{^(->}[d]
&
K_C(-\Delta)
\ar[r]\ar@{^(->}[d]
&0 
\\
0\ar[r]
&
\cal I_{C\times C}(DC)^2
\ar[r]
&
\cal I_{C\times C}(DC)
\ar[r]^-{w_C}\ar@{->>}[d]^-{\ev_\Delta}
&
K_C
\ar[r]\ar@{->>}[d]^-{\ev_\Delta}
&0 
\\ 
&&
\cal I_{C\times C}(DC)_{D\Delta}
\ar@{=}[r]
&
K_{C,\Delta}
&
}
$}
\end{equation}
In other words, we have $\cal I:=
\cal I_{C\times C}(D\Delta)\cdot\cal I_{C\times C}(DC)=w_C^{-1}(K_C(-\Delta))$. 
Also, the involution $\tau_C$ which interchanges the factors of $C\times C$ leaves 
$\cal I$ invariant. 

\nit{\rm (ii)} $H^0\bigl(
C\times C,
\cal I\otimes K_{C\times C})
\bigr)
=
w_C^{-1}\bigl(\;H^0(C,K_C^3(-\Delta))\;\bigr)$. 

\nit{\rm (iii)} For 
$
\Lambda:=
\bigl\{
s-\tau_C^*(s)\mid 
s\in 
H^0\bigl(
\cal I\otimes K_{C\times C})
\bigr)
\bigr\}
$ 
holds \\ 
\centerline{$
\Lambda
\srel{(\star)}{=}
H^0\bigl(\cal I\otimes K_{C\times C}\bigr)
\;\cap\;
\mbox{$\overset{2}{\bigwedge}$}\,H^0(K_C)
\srel{(\star\star)}{\subset}
H^0\bigl(\cal I_{C\times C}(DC)\otimes K_{C\times C}\bigr).
$}

\nit{\rm (iv)} 
$
w'_{C}(\Lambda)
=
w'_{C}\bigl(\;
H^0\bigl(\cal I\otimes K_{C\times C}\bigr)\;\bigr).
$ 
\end{lemma}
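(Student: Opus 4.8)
The four assertions are all diagram-chasing consequences of the snake-lemma-type picture, so I would organize the proof around the $3\times 3$ diagram and then pass to global sections.

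For (i), I would first recall the two basic short exact sequences on $C\times C$: the Wahl sequence $0\to\cal I_{C\times C}(DC)^2\to\cal I_{C\times C}(DC)\to K_C\to 0$ (pushed forward to the diagonal $DC\cong C$) and the defining sequence for the product of ideals $\cal I=\cal I(D\Delta)\cdot\cal I(DC)$. The key local computation is that near a point of $D\Delta$ (i.e. near $(x_{a,i},x_{a,i})$) the ideal $\cal I(D\Delta)$ agrees with the full ideal sheaf of $D\Delta$, while away from $D\Delta$ it is the unit ideal; hence $\cal I\subset\cal I(DC)$ with quotient supported on $D\Delta$, and $\cal I/\cal I(DC)^2$ is exactly the subsheaf of $K_C=\cal I(DC)/\cal I(DC)^2$ cut out by vanishing at $\Delta$, namely $K_C(-\Delta)$. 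This gives the top row with $w'_C$ the induced map, and the commutativity and exactness of the middle column and bottom row are then formal. The statement $\cal I=w_C^{-1}(K_C(-\Delta))$ is just a restatement of the identification of the top row as a subcomplex. The $\tau_C$-invariance of $\cal I$ follows because both $\cal I(DC)$ and $\cal I(D\Delta)$ are $\tau_C$-invariant ($DC$ and $D\Delta$ are the diagonals of $C$ and of $\Delta$, which $\tau_C$ fixes).

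For (ii), I would tensor the middle row of \eqref{eq:DD} by $K_{C\times C}$ and take $H^0$; since $K_{C\times C}$ restricted to the diagonal is $K_C^{\otimes 2}$, the map $w_C\otimes K_{C\times C}$ sends $H^0(\cal I(DC)\otimes K_{C\times C})$ into $H^0(K_C^3)$, and the preimage of the subspace $H^0(K_C^3(-\Delta))$ is exactly $H^0(\cal I\otimes K_{C\times C})$ by left-exactness of $H^0$ applied to the top row of \eqref{eq:DD} tensored by $K_{C\times C}$. For (iii), the inclusion $(\star\star)$ is immediate from $\cal I\subset\cal I(DC)$. For the equality $(\star)$: any $\tau_C$-anti-invariant section of $\cal I(DC)\otimes K_{C\times C}$ that lies in $H^0(\cal I\otimes K_{C\times C})$ is of the form $s-\tau_C^*s$ with $s$ itself in $H^0(\cal I\otimes K_{C\times C})$ — here I use that $\cal I$ is $\tau_C$-invariant so $\tfrac12(s-\tau_C^*s)\in H^0(\cal I\otimes K_{C\times C})$ when we start from $s$ anti-invariant, and conversely $s-\tau_C^*s$ is manifestly anti-invariant; the wedge-square $\overset{2}{\bigwedge}H^0(K_C)$ is precisely the anti-invariant part of $H^0(K_C\boxtimes K_C)=H^0(K_{C\times C})$, intersected back into the relevant space. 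Finally (iv) follows from (iii) together with the skew-symmetry of $w_C$ (hence of $w'_C$): $w'_C$ kills the symmetric part, so $w'_C(s)=\tfrac12 w'_C(s-\tau_C^*s)$, whence $w'_C\big(H^0(\cal I\otimes K_{C\times C})\big)=w'_C(\Lambda)$.

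The main obstacle is the local analysis underlying (i): one must check carefully that the product ideal $\cal I(D\Delta)\cdot\cal I(DC)$ really does produce the clean quotient $K_C(-\Delta)$ and not something with embedded components or extra torsion, i.e. that $D\Delta$ meets $DC$ transversally inside the relevant scheme structure and that multiplying ideals commutes with the restriction to $DC$ in the way claimed. Everything after that — (ii), (iii), (iv) — is bookkeeping with left-exact $H^0$, the Künneth decomposition of $H^0(K_{C\times C})$ into symmetric and anti-symmetric parts, and the skew-symmetry of the Wahl map, which is already recalled in the text.
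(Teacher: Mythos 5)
Your proposal is correct and follows essentially the same route as the paper: an explicit local verification of exactness of the top row of \eqref{eq:DD} near $D\Delta$, then tensoring by $K_{C\times C}$ and chasing sections for (ii), and the $\tau_C$-invariance of $\cal I$ together with the identity $s=\tfrac12(s-\tau_C^*s)$ on anti-invariants plus the skew-symmetry of $w_C$ for (iii) and (iv). The local check you flag as the main obstacle is settled in the paper by the one-line coordinate computation $0\to\langle u_2-u_1\rangle^2\to\langle u_2-u_1\rangle\cdot\langle u_1,u_2\rangle\to\langle u\rangle\cdot\rd u\to 0$ at each point of $D\Delta$, which is exactly the transversality statement you ask for.
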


\begin{proof}
(i) The middle column is exact because $\cal I_{C\times C}(DC)$ is locally free. 
We check the exactness of the first row around each point $(o,o)\in D\Delta$. 
Let $u$ be a local (analytic) coordinate on $C$ such that $o=0$, and $u_1,u_2$ 
be the corresponding coordinates on $C\times C$. Then the first row becomes 
$
0\to\langle u_2-u_1\rangle^2\to
\langle u_2-u_1\rangle\cdot\langle u_1,u_2\rangle\to
\langle u\rangle\cdot\rd u\to 0,
$ 
with $\rd u:=(u_2-u_1){\rm mod} (u_2-u_1)^2$, which is exact. 
The second statement is obvious. 

\nit(ii) We tensor \eqref{eq:DD} by $K_{C\times C}$, and take the sections 
in the last two columns. An elementary diagram chasing yields the claim. 

\nit(iii) Let us prove $(\star)$. 
The vector space $\Lambda$ is contained in 
$\mbox{$\overset{2}{\bigwedge}$}\,H^0(K_C)$ by the very definition, and also in 
$H^0\bigl(\cal I\otimes K_{C\times C}\bigr)$ because  the sheaf $\cal I(C,\Delta)$ 
is $\tau_C$-invariant. For the inclusion in the opposite direction, take $s$ in the 
intersection. As $s\in\mbox{$\overset{2}{\bigwedge}$}\;H^0(K_C)$, it follows 
$\tau_C^*(s)=-s$, so $s=1/2\cdot(s-\tau_C^*(s))\in\Lambda$. 
The inclusion $(\star\star)$ is obvious. 

\nit(iv) Indeed, the Wahl map is anti-commutative: 
$w_C(\underset{i}{\sum}s_i\otimes t_i)=-w_C(\underset{i}{\sum}t_i\otimes s_i)$. 
\end{proof}

\begin{lemma}\label{lm:LP} 
Let 
$\Xi:=\{(x_{\ra ,1},x_{\ra ,2}),(x_{\ra ,2},x_{\ra ,1})\mid
\ra=1,\ldots,\delta\}\subset C\times C$, 
and consider the sheaf of ideals 
$\cal I(C,\Delta)\!:=\cal I(\Xi)\!\cdot\!\cal I
\srel{\eqref{eq:Pi}}{=}
\cal I(\PP_\Delta)\!\cdot\!\cal I(DC)\subset\!\cal I\!.$ 
Furthermore, denote 
\begin{equation}\label{eq:LD}
\Lambda(\Delta)\!:=H^0(\cal I(C,\Delta)\otimes K_{C\times C})
\,\cap\,
\mbox{$\overset{2}{\bigwedge}$}\,H^0(K_C).
\end{equation}  
Then the following statements hold:

\nit{\rm (i)} $\cal I(C,\Delta)$ is $\tau_C$-invariant, so 
$w'_C\bigl(\,\Lambda(\Delta)\,\bigr)
=w'_C\bigl(\;H^0(\cal I(C,\Delta)\otimes K_{C\times C})\;\bigr).$
\vskip1ex

\nit{\rm (ii)}  
$\cal I(C,\Delta)+\cal I_{C\times C}(DC)^2=\cal I$, 
and $\cal I(C,\Delta)\cap\cal I(DC)^2=\cal I(\Xi)\cdot\cal I(DC)^2$. Therefore 
the various sheaves introduced so far fit into the commutative diagram 
\begin{equation}\label{eq:CD}
\scalebox{.8}{$
\begin{array}{l}
\xymatrix@R=1.5em@C=2em{
0\ar[r]&
\cal I(\Xi)\cdot\cal I(DC)^2\ar[r]\ar@{^(->}[d]& 
\cal I(C,\Delta)\ar[r]^-{w_{C,\Delta}'}\ar@{^(->}[d]&
K_C(-\Delta)
\ar[r]\ar@{=}[d]&
0\\
0\ar[r]&
\cal I(DC)^2\ar@{->>}[d]^-{\ev_\Xi''}\ar[r]& 
\cal I\ar@{->>}[d]^-{\ev'_\Xi}\ar[r]^-{w'_{C}}&
K_C(-\Delta)\ar[r]&
0\\
&\cal O_{\Xi}\ar[r]^-\cong&\cal O_{\Xi}&&
}
\\[.75ex] 
\text{
(The homomorphism $w_{C,\Delta}$ in the introduction equals 
$H^0(w_{C,\Delta}')$, defined after tensoring by $K_{C\times C}$.)
}
\end{array}
$}
\end{equation}
\end{lemma}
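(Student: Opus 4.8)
The plan is to deduce (i) from a symmetry argument identical to Lemma~\ref{lm:LD}(iii),(iv), and to reduce (ii) to two stalkwise identities of ideal sheaves, after which the diagram~\eqref{eq:CD} is assembled purely formally. For (i): $\Xi$ is invariant under the factor-interchange $\tau_C$, since by construction it contains each pair $(x_{a,1},x_{a,2})$ together with $(x_{a,2},x_{a,1})$, so $\cal I(\Xi)$ is $\tau_C$-invariant; as $\cal I$ is $\tau_C$-invariant by Lemma~\ref{lm:LD}(i), so is the product $\cal I(C,\Delta)=\cal I(\Xi)\cdot\cal I$. Then, given $s\in H^0(\cal I(C,\Delta)\otimes K_{C\times C})$, the section $\tau_C^*(s)$ again lies in this space, the difference $s-\tau_C^*(s)$ is $\tau_C$-anti-invariant, hence belongs to $\mbox{$\overset{2}{\bigwedge}$}\,H^0(K_C)$ under the Künneth identification $H^0(K_{C\times C})=H^0(K_C)\otimes H^0(K_C)$, and therefore $s-\tau_C^*(s)\in\Lambda(\Delta)$. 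Since $w'_C$ is the restriction of the anti-commutative Wahl map $w_C$, one has $w'_C(\tau_C^*(s))=-w'_C(s)$, whence $w'_C(s)=\tfrac12\,w'_C\bigl(s-\tau_C^*(s)\bigr)\in w'_C(\Lambda(\Delta))$; the reverse inclusion being trivial, this gives (i).

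For (ii), the key geometric point is that $\Xi$ is disjoint from the diagonal $DC$ — the two coordinates of a point of $\Xi$ are the distinct points $x_{a,1}\ne x_{a,2}$ — hence also disjoint from $D\Delta\subset DC$, while $\PP_\Delta=D\Delta\sqcup\Xi$. Consequently $\cal I(\PP_\Delta)=\cal I(D\Delta)\cdot\cal I(\Xi)$, which immediately yields the identification $\cal I(C,\Delta)=\cal I(\Xi)\cdot\cal I=\cal I(\PP_\Delta)\cdot\cal I(DC)$ of the statement. I would then check the two equalities stalk by stalk: at a point outside $\Xi$ one has $\cal I(\Xi)=\cal O$, so $\cal I(C,\Delta)=\cal I$ and both equalities reduce to the inclusion $\cal I(DC)^2\subseteq\cal I$ — which holds because near $D\Delta$ one has $\langle u_2-u_1\rangle\subseteq\langle u_1,u_2\rangle$ in local coordinates, and $\cal I=\cal I(DC)$ away from $D\Delta$; at a point of $\Xi$ one has $\cal I=\cal O=\cal I(DC)^2$ and the equalities are trivial. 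This establishes $\cal I(C,\Delta)+\cal I(DC)^2=\cal I$ and $\cal I(C,\Delta)\cap\cal I(DC)^2=\cal I(\Xi)\cdot\cal I(DC)^2$.

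It remains to assemble \eqref{eq:CD}. Its middle row is the top row of \eqref{eq:DD}; its top row arises by restricting $w'_C$ to $\cal I(C,\Delta)\subset\cal I$, which stays surjective onto $K_C(-\Delta)$ because $w'_C$ kills $\cal I(DC)^2$ and $\cal I(C,\Delta)+\cal I(DC)^2=\cal I$, and whose kernel is $\cal I(C,\Delta)\cap\cal I(DC)^2=\cal I(\Xi)\cdot\cal I(DC)^2$ by the second identity of (ii). The left and middle vertical inclusions both have cokernel $\cal O_\Xi$ (supported on $\Xi$, where $\cal I(DC)^2=\cal I=\cal O$), and the right column is the identity, so the three columns are short exact. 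The $3\times3$ lemma then forces the bottom row $0\to\cal O_\Xi\to\cal O_\Xi\to 0\to 0$ to be exact, so the induced map $\cal O_\Xi\to\cal O_\Xi$ is an isomorphism — in fact the identity, being induced by $\cal I(DC)^2\hookrightarrow\cal I$, which is an equality of stalks along $\Xi$. Twisting throughout by $K_{C\times C}$ and passing to $H^0$ of the top row identifies $w_{C,\Delta}$ of the introduction with $H^0(w'_{C,\Delta})$.

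The argument is essentially bookkeeping and I do not expect a genuine obstacle. The only point requiring care is keeping the supports straight: it is the disjointness of $\Xi$ from the diagonal that makes all the products of ideal sheaves transparent and both relevant cokernels equal to $\cal O_\Xi$. The sign discipline in (i) is the other thing to watch, but it is verbatim that of Lemma~\ref{lm:LD}.
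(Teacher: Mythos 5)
Your proof is correct and follows essentially the same route as the paper: part (i) is the anti-commutativity/$\tau_C$-invariance argument of Lemma~\ref{lm:LD}(iii)--(iv), and part (ii) rests on the disjointness of $\Xi$ from the diagonal, which the paper phrases as the comaximality $\cal O_{C\times C}=\cal I(\Xi)+\cal I$ and you verify stalk by stalk --- the same underlying fact. The assembly of \eqref{eq:CD} and the identification of the cokernels with $\cal O_\Xi$ are as in the paper.
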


\begin{proof}
(i) The proof is identical to Lemma \ref{lm:LD}(iv). 

\nit(ii) The inclusion $\subset$ is clear. For the reverse, notice that 
$\cal O_{C\times C}=\cal I(\Xi)+\cal I$, so 
$\cal I\kern-.35ex\subset\kern-.35ex\cal I(C,\Delta)+\cal I^2
\kern-.35ex\subset\kern-.35ex\cal I(C,\Delta)+\cal I(DC)^2$. 
The second claim is analogous.
\end{proof}

Now we compare the Wahl maps of $C$ and $\tS$. 
Let $\rho\!:\!\cal I_{\tld S\times\tld S}(D\tld S)\to \cal I_{C\times C}(DC)$ 
be the restriction homomorphism, and $\cal M\!:=\si^*\cal L(-E)$. The diagram below 
relates various objects involved in the definition of $w_C$ and $w_{\tld S}$: 
\begin{equation}\label{eq:W}
\hspace{-4ex}
\scalebox{.7}{$
\xymatrix@R=1.25em@C=1em{
&0\ar[d]&0\ar[d]&0\ar[d]&
\\ 
0\ar[r]&
\cal O_{\tld S}(-E)\ar[r]\ar[d]&
\cal O_{\tld S}(E)\ar[r]\ar[d]&
\cal O_{2E}(E)\ar[r]\ar[d]&
0
\\ 
0\ar[r]&
\cal M(-2E)\ar[r]\ar[d]&
\cal M\ar[r]\ar[d]&
\cal M\otimes\cal O_{2E}\ar[r]\ar[d]&
0
\\ 
0\ar[r]&
K_C(-2\Delta)\ar[r]\ar[d]&
K_C\ar[r]\ar[d]&
K_C\otimes\cal O_{2\Delta}\ar[r]\ar[d]&
0
\\ 
&0&0&0&
}
$}
\hspace{1ex}
\scalebox{.7}{$
\xymatrix@R=1.75em@C=1em{
0\ar[r]\ar[d]&
H^0(\cal O_{\tld S}(E))=\mbb C\tld s_E\ar[r]\ar@{^(->}[d]&
H^0(\cal O_{2E}(E))\ar[r]\ar@{^(->}[d]&
H^1(\cal O_{\tld S}(-E))\ar[r]&
0
\\ 
H^0(\cal M(-2E))\ar@{^(->}[r]\ar[d]&
H^0(\cal M)\ar[r]\ar@{->>}[d]&
H^0(\cal M\otimes\cal O_{2E})\ar[r]\ar@{->>}[d]&
H^1(\cal M(-2E))\ar[r]&
\ldots
\\ 
H^0(K_C(-2\Delta))\ar@{^(->}[r]\ar[d]&
H^0(K_C)\ar[r]&
\mbox{$\underset{x\in\Delta}{\bigoplus}$}K_{C,2x}\ar[r]&&
\\ 
H^1(\cal O_{\tld S}(-E))&&&&
}
$}
\end{equation}
The rightmost column corresponds to the first order expansions of the sections along 
$E$ and at $\Delta$. By using $0{\to}\cal O_E(1){\to}\cal O_{2E}{\to}\cal O_E{\to}0$, 
we deduce that it fits into:
\begin{equation}\label{eq:2E}
\scalebox{.8}{$
\xymatrix@R=1.5em@C=2em{
H^0(\cal O_E)\cong\mbb C^\delta\ar@{^(->}[r]\ar[d]^-\cong
&
H^0(\cal O_E(2))\cong\mbb C^{3\delta}\ar@{->>}[r]\ar@{^(->}[d]
&
H^0(\cal O_\Delta)\cong\mbb C^{2\delta}\ar@{^(->}[d]
\\ 
H^0(\cal O_{2E}(E))\ar@{^(->}[r]&
H^0(\cal M\otimes\cal O_{2E})\ar@{->>}[r]\ar@{->>}[d]&
H^0(K_C\otimes\cal O_{2\Delta})\ar@{->>}[d]
\\ 
&
H^0(\cal O_E(1))\cong\mbb C^{2\delta}\ar[r]^-\cong
&
H^0(\cal O_\Delta)\cong\mbb C^{2\delta}
}
$}
\end{equation}
Along each $E^\ra\subset\tld S$, we consider local coordinates $u,v$ as follows: 
$v$ is the coordinate along $E^\ra$, and $u$ is a coordinate in the normal direction 
to $E^\ra$ (so $E^\ra$ is given by $\{u=0\}$). 
Moreover, we assume that $C$ is given by $\{v=0\}$ around the intersection points 
$\{x_{\ra ,1},x_{\ra ,2}\}=E^\ra \cap C$. 
Then any element $\tld s\in H^0(\cal M)$ can be expanded as 
\begin{equation}\label{eq:tlds}
\tld s=\tld s^\ra _0(v)+u\tld s^\ra _1(v)+O(u^2),
\end{equation} 
and its image in $H^0(\cal M\otimes\cal O_{2E^\ra })$ is 
$\tld s^\ra _0(v)+u\tld s^\ra _1(v)$. Finally, observe that the values 
of $w_{\tld S}$ are sections of $\Omega^1_{\tld S}\otimes\cal M^2$, 
and the restriction of this latter to $E$ fits into \\[.5ex]
\centerline{
$
0\to
\underbrace{\cal O_E(3)}%
_{\text{\scriptsize normal component}}
\to
\Omega^1_{\tld S}\otimes\cal M^2|_{E}\to
\underbrace{\Omega^1_E\otimes\cal M^2_E=\cal O_E}%
_{\text{\scriptsize tangential component}}
\to 0. 
$
}

\begin{lemma}\label{lm:E}
Let the notations be as in \eqref{eq:tlds}. We consider 
$\tld e=
\underset{i}{\sum}\,\tld s_i\wedge\tld t_i\in\overset{2}{\bigwedge}\,H^0(\cal M)$, 
and let $e:=\rho(\tld e)=\underset{i}{\sum}\, s_i\wedge t_i$. 
Then the following statements hold:

\nit{\rm(i)} 
$w_\tS(\tld e)\in H^0(\Omega^1_{\tld S}(-E)\otimes\cal M^2)$ if and only if: 
\\[.5ex]\centerline{
$
\left\{
\begin{array}{ccl}
(\star)
&\quad&
\underset{i}{\sum}\,
\bigl(
s_{i}(x_{\ra ,1}) t_{i}(x_{\ra ,2})
-
t_{i}(x_{\ra ,1}) s_{i}(x_{\ra ,2})
\bigr)
=0,\quad\text{and}
\\[1ex] 
(\star\star)
&\quad&
\underset{i}{\sum}\,
\bigl(
\tld s_{i,0}^\ra \tld t_{i,1}^\ra -\tld t_{i,0}^\ra \tld s_{i,1}^\ra 
\bigr)
=0,\quad\forall\,a=1,\ldots,\delta.
\end{array}
\right.
$
}

\nit{\rm (ii)}  
$\Lambda(E):=
w_{\tld S}^{-1}\bigl(H^0(\Omega^1_{\tld S}(-E)\otimes\cal M^2)\bigr)
\,\cap\,\overset{2}{\bigwedge}H^0(\cal M)$ has the property 
\\ \centerline{
$
w_{\tld S}(\Lambda(E))
=
w_{\tld S}\bigl(\;
w_{\tld S}^{-1}\bigl(H^0(\Omega^1_{\tld S}(-E)\otimes\cal M^2)\bigr)
\;\bigr).
$
}
\nit{\rm (iii)} 
$\rho\big(\,\Lambda(E)\,\big)\subset\Lambda(\Delta)$, where the right hand side is 
defined by \eqref{eq:LD}. 
\end{lemma}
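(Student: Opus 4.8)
The plan is to prove (i) by an explicit computation along the exceptional divisors, and then to obtain (ii) and (iii) as essentially formal consequences, the last one through the diagram chasing already prepared in Section \ref{sct:CS}.

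For (i): $w_{\tld S}(\tld e)$, being a global section of $\Omega^1_{\tld S}\otimes\cal M^2$, lies in $H^0(\Omega^1_{\tld S}(-E)\otimes\cal M^2)$ precisely when its restriction to the reduced divisor $E$ vanishes; since the $E^\ra$ are pairwise disjoint this is a separate condition on each $E^\ra$, and since $w_{\tld S}$ involves only first-order derivatives it depends only on the $2E^\ra$-jets \eqref{eq:tlds}. I would use the exact sequence $0\to\cal O_E(3)\to\Omega^1_{\tld S}\otimes\cal M^2|_E\to\cal O_E\to 0$ recalled before the statement: vanishing of $w_{\tld S}(\tld e)$ along $E^\ra$ is equivalent to the simultaneous vanishing of its tangential component, a section of $\Omega^1_{E^\ra}\otimes\cal M^2|_{E^\ra}\cong\cal O_{E^\ra}$, and of its normal component, a section of $N^*_{E^\ra/\tld S}\otimes\cal M^2|_{E^\ra}\cong\cal O_{E^\ra}(3)$. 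In the coordinates $(u,v)$ one has $w_{\tld S}(\tld s_i\wedge\tld t_i)=(\tld s_i\,\partial_u\tld t_i-\tld t_i\,\partial_u\tld s_i)\,du+(\tld s_i\,\partial_v\tld t_i-\tld t_i\,\partial_v\tld s_i)\,dv$; inserting \eqref{eq:tlds} and setting $u=0$, the coefficient of $du$ becomes $\sum_i(\tld s^\ra_{i,0}\tld t^\ra_{i,1}-\tld t^\ra_{i,0}\tld s^\ra_{i,1})$ — exactly the left side of $(\star\star)$ — while the coefficient of $dv$ becomes $\sum_i\bigl(\tld s^\ra_{i,0}(\tld t^\ra_{i,0})'-\tld t^\ra_{i,0}(\tld s^\ra_{i,0})'\bigr)$, a sum of Wronskians of pairs of sections of $\cal O_{E^\ra}(1)\cong\cal M|_{E^\ra}$. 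A section of $\cal O_{\bbP^1}(1)$ is determined by its values at the two points $x_{\ra,1},x_{\ra,2}$, and since both $C$ and $E^\ra$ pass through these with $\cal M|_C=K_C$, the value of $\tld s^\ra_{i,0}$ at $x_{\ra,j}$ equals $s_i(x_{\ra,j})$; a one-line computation then identifies each Wronskian with a fixed nonzero multiple of $s_i(x_{\ra,1})t_i(x_{\ra,2})-t_i(x_{\ra,1})s_i(x_{\ra,2})$, so its vanishing summed over $i$ is exactly $(\star)$. Hence $w_{\tld S}(\tld e)$ vanishes along $E$ iff $(\star)$ and $(\star\star)$ both hold.

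For (ii): this is the formal argument already used in Lemma \ref{lm:LP}(i) and Lemma \ref{lm:LD}(iv). Since $w_{\tld S}$ is skew-symmetric it kills the part of its domain inside $\Sym^2 H^0(\cal M)$, so $\Img(w_{\tld S})=w_{\tld S}\bigl(\overset{2}{\bigwedge}H^0(\cal M)\bigr)$; writing $V:=H^0(\Omega^1_{\tld S}(-E)\otimes\cal M^2)$, $w_{\tld S}(w_{\tld S}^{-1}(V))=V\cap\Img(w_{\tld S})=w_{\tld S}\bigl(\overset{2}{\bigwedge}H^0(\cal M)\cap w_{\tld S}^{-1}(V)\bigr)=w_{\tld S}(\Lambda(E))$. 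For (iii): let $\tld e\in\Lambda(E)$ and $e=\rho(\tld e)=\sum_i s_i\wedge t_i$. Being antisymmetric, $e$ vanishes on $DC$, so $e\in H^0\bigl(\cal I_{C\times C}(DC)\otimes K_{C\times C}\bigr)$ and $w_C(e)$ is defined. By the comparison between $w_C$ and $w_{\tld S}$ encoded in \eqref{eq:W} (using $\cal M|_C=K_C$), $w_C(e)$ is the image of $w_{\tld S}(\tld e)|_C$ under $\Omega^1_{\tld S}\otimes\cal M^2|_C\to\Omega^1_C\otimes K_C^2=K_C^3$; since $w_{\tld S}(\tld e)$ vanishes along $E$ and $C$ meets $E$ transversally in $\Delta$, so that $\cal O_{\tld S}(-E)|_C=\cal O_C(-\Delta)$, we get $w_C(e)\in H^0(K_C^3(-\Delta))$, whence $e\in H^0(\cal I\otimes K_{C\times C})$ by Lemma \ref{lm:LD}(ii). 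On the other hand part (i) gives $(\star)$, which says exactly that the value $\sum_i\bigl(s_i(x_{\ra,1})t_i(x_{\ra,2})-t_i(x_{\ra,1})s_i(x_{\ra,2})\bigr)$ of $e$ at $(x_{\ra,1},x_{\ra,2})$ vanishes for every $\ra$, i.e. $e$ vanishes along $\Xi$ (by antisymmetry also at $(x_{\ra,2},x_{\ra,1})$). Since $\cal I$ equals the structure sheaf near $\Xi$ and $\cal I(C,\Delta)=\cal I(\Xi)\cdot\cal I=\ker(\ev'_\Xi\colon\cal I\to\cal O_\Xi)$ by \eqref{eq:CD}, this upgrades $e$ to a section of $\cal I(C,\Delta)\otimes K_{C\times C}$; as also $e\in\overset{2}{\bigwedge}H^0(K_C)$, we conclude $e\in\Lambda(\Delta)$. (Note that $(\star\star)$ plays no role here; it will be used later.)

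The main obstacle lies entirely in (i): one has to interpret the jet expansions \eqref{eq:tlds} with respect to a local trivialization of $\cal M$ near $E^\ra$ compatible with the one induced on $C$, correctly identify the tangential and normal summands of $\Omega^1_{\tld S}\otimes\cal M^2|_{E^\ra}$ and the line bundles they carry, and keep precise track of the nonzero constants in the Wronskian identity linking the $dv$-coefficient with $(\star)$. Once (i) is in place, (ii) is purely formal and (iii) is a short diagram chase resting on Lemma \ref{lm:LD}(ii) and the diagrams \eqref{eq:W}, \eqref{eq:CD}.
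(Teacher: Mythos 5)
Your proof is correct and follows essentially the same route as the paper: the same normal/tangential decomposition along $E^\ra$ and the same Wronskian identity for linear polynomials in part (i), the same skew-symmetry argument for (ii), and for (iii) the same conclusion reached via Lemma \ref{lm:LD}(ii) and the vanishing at $\Xi$. One aside is inaccurate: in (iii) the condition $(\star\star)$ \emph{does} play a role --- it is exactly what makes $w_C(e)$ vanish at $\Delta$ (the paper evaluates this directly), and your own argument uses it implicitly through the hypothesis that $w_{\tld S}(\tld e)$ vanishes along all of $E$, not just through $(\star)$.
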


\begin{proof}
(i) The element $w_{\tld S}(\tld e)$ vanishes along $E$ if and only if both its 
tangential and normal components along each $E^\ra \subset E$ vanish. 
A short computation shows that the normal component is $(\star\star)$. 
The tangential component is 
$\underset{i}{\sum}(
\tld s^\ra _{i,0}(\tld t^\ra _{i,0})'-\tld t^\ra _{i,0}(\tld s^\ra _{i,0})'
)$. But $\tld s^\ra _{i,0},\tld t^\ra _{i,0}\in H^0(\cal O_{E^\ra }(1))$, 
that is they are linear polynomials in $v$, so\\[.5ex] 
\centerline{$
\tld s^\ra _{i,0}(\tld t^\ra _{i,0})'-\tld t^\ra _{i,0}(\tld s^\ra _{i,0})' 
=
\tld s^\ra _{i,0}(x_{\ra ,1})\tld t^\ra _{i,0}(x_{\ra ,2})
-\tld t^\ra _{i,0}(x_{\ra ,1})\tld s^\ra _{i,0}(x_{\ra ,2}),
$}\\[.5ex] 
up to a constant factor. Also, we have 
$\tld s^\ra _{i,0}(x_{\ra ,j})=\tld s^\ra (x_{\ra ,j})
=s(x_{\ra ,j})$, and $(\star)$ follows. 

\nit(ii) The vector space 
$w_{\tld S}^{-1}\bigl(H^0(\Omega^1_{\tld S}(-E)\otimes\cal M^2)\bigr)$ 
is invariant under the involution $\tau_{\tld S}$ of $\tld S\times\tld S$ 
which switches the two factors.  As $w_{\tld S}$ is anti-commutative, 
the claim follows as in Lemma \ref{lm:LD}. 

\nit(iii) Take $\tld e\in\Lambda(E)$ and $e:=\rho(\tld e)$. 
Then $e(x_{\ra ,1},x_{\ra ,2})=-e(x_{\ra ,2},x_{\ra ,1})\srel{(\star)}{=}0$, and 
also $w_C(e)(x_{\ra ,j})$ equals the expression $(\star\star)$ at $x_{\ra ,j}$ 
(so it vanishes), for $j=1,2$. 
\end{proof}

Now we consider the commutative diagram:
\begin{equation}{\label{nodal-fc}} 
\scalebox{.8}{$
\xymatrix@R=2em@C=3em{ 
\Lambda(E)\ar[r]^-{w_{\tld S}}\ar[d]_{\rho_\Delta}
^-{\mbox{\kern-1ex\small%
\begin{tabular}{l}\it cf.\\ \ref{lm:E}\text{(iii)}\end{tabular}}}
& 
H^0\bigl(\tS,\Omega^1_\tS(-E)\otimes\cal M^2\bigr) 
\ar[r]^-{\res_C}\ar[d]& 
H^0\bigl(C,\Omega^1_\tS|_C\otimes K_C^2(-\Delta)\bigr) 
\ar[dl]^-b 
\\  
\Lambda(\Delta)\ar[r]^(0.42){w_{C,\Delta}}& 
H^0\bigl(C,K_C^3(-\Delta)\bigr) 
&} 
$}
\end{equation} 
It is the substitute in the case of nodal curves for \cite[diagram (4.2)]{ha}. 

\begin{lemma}\label{lm:surj} Assume $\Pic(S)=\mbb Z\cA$. 
Then $\rho_\Delta:\Lambda(E)\to\Lambda(\Delta)$ is surjective.
\end{lemma}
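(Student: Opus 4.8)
The plan is to prove surjectivity by lifting an element of $\Lambda(\Delta)$ to a section over $\tld S$ and then correcting it along $E$, which reduces the whole statement to a single cohomology vanishing. First, the restriction $H^0(\cal M)\to H^0(\cal M\otimes\cal O_C)=H^0(K_C)$ is surjective: its kernel is $H^0\bigl(\cal M(-C)\bigr)=H^0\bigl(\cal O_{\tld S}(E)\bigr)=\mbb C\,\tld s_E$, and the obstruction $H^1\bigl(\cal O_{\tld S}(E)\bigr)=H^1(K_{\tld S})$ vanishes by Serre duality on $\tld S$, since $H^1(\cal O_{\tld S})=H^1(\cal O_S)=0$. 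Here $\tld s_E$ is the canonical section of $\cal O_{\tld S}(E)$, viewed in $H^0(\cal M)$ via the inclusion $\cal O_{\tld S}(E)=\cal M(-C)\hra\cal M$ (multiplication by the section $s_C$ of $\cal O_{\tld S}(C)$ cutting out $C$); thus $\tld s_E$ vanishes on $C\cup E$. Consequently $\rho\colon\overset{2}{\bigwedge}H^0(\cal M)\to\overset{2}{\bigwedge}H^0(K_C)$ is surjective, with kernel $\tld s_E\wedge H^0(\cal M)$. So, given $e\in\Lambda(\Delta)$, I choose any $\tld e_0\in\overset{2}{\bigwedge}H^0(\cal M)$ with $\rho(\tld e_0)=e$; it then suffices to find $\tld t\in H^0(\cal M)$ making $\tld e:=\tld e_0+\tld s_E\wedge\tld t$ belong to $\Lambda(E)$, because then $\rho_\Delta(\tld e)=\rho(\tld e)=e$ by Lemma \ref{lm:E}(iii).

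Next I would analyse, using the criterion of Lemma \ref{lm:E}(i), when $w_{\tld S}(\tld e)$ vanishes along $E$. Since $\tld s_E$ vanishes on $E$, adding $\tld s_E\wedge\tld t$ does not alter the tangential component of $w_{\tld S}$ along any $E^a$; that component is therefore the one of $w_{\tld S}(\tld e_0)$, namely the expression $(\star)$ for $e$, which vanishes because $e\in H^0(\cal I(C,\Delta)\otimes K_{C\times C})$ vanishes on $\PP_\Delta$. For the normal component, let $N^a\in H^0(\cal O_{E^a}(3))$ be that of $w_{\tld S}(\tld e_0)$ along $E^a$. By the naturality of the Wahl map underlying \eqref{nodal-fc} ($w_C\circ\rho=b\circ\res_C\circ w_{\tld S}$), combined with the transversality of $C$ and $E^a$ at $x_{a,j}$ — under which the conormal of $E^a$ and the cotangent of $C$ agree there — the value of $N^a$ at $x_{a,j}$ is $w_C(e)(x_{a,j})$, which is $0$ since $e\in\Lambda(\Delta)$ forces $w_C(e)\in H^0(K_C^3(-\Delta))$. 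Hence $N^a$ is divisible by $s_C|_{E^a}$, the section of $\cal O_{E^a}(2)$ cutting out $C\cap E^a=\{x_{a,1},x_{a,2}\}$, so $N^a=s_C|_{E^a}\cdot\ell^a_0$ with $\ell^a_0\in H^0(\cal O_{E^a}(1))$. On the other hand, the normal component of $w_{\tld S}(\tld s_E\wedge\tld t)$ along $E^a$ equals $-\,(\tld t|_{E^a})\cdot(\tld s_E)^a_1$, and $(\tld s_E)^a_1\in H^0(\cal O_{E^a}(2))$ has the same zero set $\{x_{a,1},x_{a,2}\}$ as $s_C|_{E^a}$, hence equals $c_a\,s_C|_{E^a}$ for some $c_a\neq0$. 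So $w_{\tld S}(\tld e)$ vanishes along all of $E$ precisely when $\tld t|_{E^a}$ equals the prescribed linear form $\ell^a:=c_a^{-1}\ell^a_0$ for every $a$.

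It remains to produce $\tld t\in H^0(\cal M)$ with prescribed restrictions to the pairwise disjoint curves $E^1,\dots,E^\delta$, i.e.\ to show that
\[
H^0(\cal M)\;\lar\;\bigoplus\nolimits_{a=1}^{\delta}H^0\bigl(\cal M|_{E^a}\bigr)=\bigoplus\nolimits_{a=1}^{\delta}H^0\bigl(\cal O_{E^a}(1)\bigr)
\]
is surjective. Its cokernel injects into $H^1\bigl(\cal M(-E)\bigr)=H^1\bigl(\tld S,\si^*\cal L(-2E)\bigr)$, so the lemma reduces to the vanishing $H^1\bigl(\tld S,\si^*\cal L(-2E)\bigr)=0$ — equivalently, the first infinitesimal neighbourhoods of the $\delta$ nodes impose independent conditions on $|d\cA|$. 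This is the genuinely substantial point, and it is exactly where the hypothesis $\Pic(S)=\mbb Z\cA$ (and the bound \eqref{eq:dta} on $\delta$) enters: since $K_{\tld S}=\cal O_{\tld S}(E)$, Kawamata--Viehweg vanishing reduces it to the nefness and bigness of $\si^*\cal L(-3E)$ (bigness being automatic, as $(\si^*\cal L(-3E))^2=d^2\cA^2-9\delta>0$ by \eqref{eq:dta}), i.e.\ to the Seshadri-type estimate $\veps(\cA;\euf N)\ges3/d$; and because every irreducible curve on $S$ is a multiple of $\cA$, this in turn follows from the genus bound for curves of class $k\cA$ through $\euf N$ with prescribed multiplicities — which is precisely the multiple point Seshadri constant lower bound of the paper. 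I expect this cohomological/Seshadri estimate to be the main obstacle; the reduction above is otherwise routine.
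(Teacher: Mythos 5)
Your proposal is correct and follows essentially the same route as the paper's proof: both lift $e\in\Lambda(\Delta)$ to $\overset{2}{\bigwedge}H^0(\cal M)$, note that the tangential component of $w_{\tld S}$ along $E$ vanishes automatically while the normal component vanishes at $\Delta$ and hence equals $\tld s_C|_E$ times a section of $\cal O_E(1)$, and then cancel that section by adding a kernel element $\tld t\wedge(\tld s_C\tld s_E)$ with $\tld t|_E$ prescribed, which exists by the surjectivity of $H^0(\cal M)\to H^0(\cal M_E)$ (Corollary \ref{cor:0}, via the Seshadri bound). The only difference is cosmetic: you re-derive the two auxiliary facts (the surjectivity of $H^0(\cal M)\to H^0(K_C)$, i.e.\ \cite[Lemma A.1]{ha}, and Corollary \ref{cor:0}) rather than citing them.
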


\begin{proof}
The restriction $H^0(\tS,\cal M)\to H^0(C,K_C)$ is surjective 
(see \cite[Lemma A.1]{ha}), and the kernel of 
$
\overset{2}{\bigwedge}\,H^0(\tld S,\cal M)\to
\overset{2}{\bigwedge}\,H^0(C,K_C)
$ 
consists of elements of the form $\tld t\wedge (\tld s_C\tld s_E)$, where 
$\tld t\in H^0(\cal M)$ and $\tld s_C,\tld s_E$ are the canonical sections of 
$\cal O_\tS(C)$ and $\cal O_\tS(E)$ respectively. 
(See the middle column of \eqref{eq:W}.) 

Consider $e=\underset{i}{\sum}\;(s_i\otimes t_i-t_i\otimes s_i)
\in\Lambda(\Delta)$, and let 
$\tld e=\underset{i}{\sum}\;(\tld s_i\otimes\tld t_i-\tld t_i\otimes\tld s_i)
\in\overset{2}{\bigwedge}\,H^0(\cal M)$ 
be such that $\rho(\tld e)=e$. The proof of \ref{lm:E}(i) shows that, for all $\ra $, the 
tangential component of $w_\tS(\tld e)|_{E^\ra }$ equals $e(x_{\ra ,1},x_{\ra ,2})=0$, 
so $w_\tS(\tld e)|_{E}$ is a section of 
$\Omega^1_{E/\tS}\otimes\cal M_E^2\cong\cal O_E(3)$. 
Since $w_\tS(\tld e)|_{E}$ vanishes at the points of $\Delta$, it is actually determined up 
to an element in $H^0(\cal O_E(1))$. We claim that this latter can be cancelled by adding 
to $\tld e$ a suitable element of the form $\tld t\wedge (\tld s_C\tld s_E)$. 
A short computation yields 
\\[.5ex] \centerline{
$
w_\tS(\,\tld t\wedge (\tld s_C\tld s_E)\,)|_E
=
\tld t_E\cdot(\tld s_C|_E)\cdot (\rd\tld s_E)|_E\in \cal O_E(3),
$
}\\[.5ex]
where $\tld t_E\in H^0(\cal O_E(1))$, $\tld s_C|_E\in H^0(\cal O_E(2))$ 
vanishes at $\Delta=E\cap C$, and $(\rd\tld s_E)|_E\in H^0(\cal O_E)$ 
(it is a section of $\Omega^1_\tS|_E$ with vanishing tangential component). 
Thus these two latter factors are actually (non-zero) scalars. 

The previous discussion shows that 
$\tld e+\tld t\wedge (\tld s_C\tld s_E)\in\Lambda(E)$ 
as soon as $\tld t\in H^0(\cal M)$ satisfies 
$\tld t_E=-w_\tS(\tld e)|_E\in H^0(\cal M_E)$. According to 
Corollary \ref{cor:0}, such an element $\tld t$ exists because 
the restriction $H^0(\cal M)\to H^0(\cal M_E)$ is surjective. 
\end{proof}

\begin{proof}(of Theorem \ref{thm:main}) (i) 
{\it Case $\Pic(S)=\mbb Z\cA$.}\quad 
If $w_{C,\Delta}$ is surjective, then the homomorphism $b$ in the diagram 
\eqref{nodal-fc} is surjective too. 
Now we follow the same pattern as in \cite[pp. 884, top]{ha}: 
$b$ is the restriction homomorphism at the level of sections of \\ 
\centerline{
$
0\to K_C\to\Omega^1_\tS\bigr|_C\otimes K_C^2(-\Delta) 
\to K_C^3(-\Delta)\to 0,
$
}
and its surjectivity implies that this sequence splits. 
This contradicts \cite[Lemma 4.1]{ha}. 

\nit{\it General case.}\label{gen-case}\quad 
It is a deformation argument. We consider \\ 
\centerline{$
\begin{array}{l}
\cal K_n:=
\{(S,\cA)\mid\cA\in\Pic(S)\text{ is ample, not divisible},\cA^2=2(n-1)\},
\\ 
\cal V^d_{n,\delta}:=\{((S,\cA),\hat C)\mid(S,\cA)\in\cal K_n,\;
\hat C\in|d\cA|\text{ nodal curve with }\delta\text{ nodes}
\}.
\end{array}
$}
Then the natural projection $\kappa:\cal V^d_{n,\delta}\to\cal K_n$ is 
submersive onto an open subset of $\cal K_n$. (See \cite[Theorem 1.1(iii)]{ha} 
and the reference therein.) 

Hence for any $((S,\cA),\hat C)\in\cal V^d_{n,\delta}$ there is a smooth 
deformation $((S_t,\cA_t),\hat C_t)$ parameterized by an open subset 
$T\subset\cal K_n$. The points $t\in T$ such that $\Pic(S_t)=\mbb Z\cA_t$ are dense; 
for these $w_{C_t,\Delta_t}$ are non-surjective. Since the non-surjectivity condition 
is closed, we deduce that $w_{C,\Delta}$ is non-surjective too.  
\medskip

\nit(ii) Now let $(X,\Delta_X)$ be a generic marked curve of genus at least $12$. 
By \cite{chm}, the Wahl map 
$w_X\,{:}\,\overset{2}{\bigwedge}\,H^0(K_X)\to H^0(K_X^3)$ is surjective, 
thus $\wtld w\,'_X\,{:=}\,H^0(w'_X\otimes K_{X\times X})$ 
in \eqref{eq:CD} is surjective as well (see lemma \ref{lm:LD}(ii)). 
As $\delta\les\frac{g-1}{2}$, the evaluation homomorphism 
$H^0(K_X)\to K_{X}\otimes\cal O_{\Delta_X}$ is surjective for 
generic markings, so the same holds for 
\\ \centerline{$
H^0(K_X)^{\otimes 2}\to
\ouset{a=1}{\delta}{\bigoplus}(K_{X,x_{a,1}}\oplus K_{X,x_{a,2}})^{\otimes 2}.
$}
The restriction to the anti-symmetric part (on both sides) yields the surjectivity of 
\\ \centerline{
$\ev_\Xi:\overset{2}{\bigwedge}H^0(K_X)\to
\ouset{a=1}{\delta}{\bigoplus}K_{X,x_{a,1}}\otimes K_{X,x_{a,2}}
=\ouset{a=1}{\delta}{\bigoplus} K_{X\times X,(x_{a,1},\,x_{a,2})}.
$
}
(For $s\in\overset{2}{\bigwedge}H^0(K_X)$,  $\ev_\Xi(s)$ takes opposite values 
at $(x_{a,1},x_{a,2})$ and $(x_{a,2},x_{a,1})$.) 

The diagram \eqref{eq:CD} yields 
\begin{equation}\label{eq:X}
\hspace{-3ex}\scalebox{.8}{$
\xymatrix@R=1.5em@C=1.55em{
H^0\big(\cal I(\Xi)\cdot\cal I(DX)^2\otimes K_{X\times X}\big)
\cap\overset{2}{\bigwedge}H^0(K_X)
\ar@{^(->}[r]\ar@{^(->}[d]
&
H^0\big(\cal I(\Xi)\cdot\cal I\otimes K_{X\times X}\big)
\cap\overset{2}{\bigwedge} H^0(K_X)
\ar[r]^-{w_{X,\Delta_X}}\ar@{^(->}[d]
&
H^0(K_X^3(-\Delta_X))\ar@{=}[d]
\\ 
H^0\big(\cal I(DX)^2\otimes K_{X\times X}\big)\cap\overset{2}{\bigwedge} H^0(K_X)
\ar[r]\ar[d]^-{\ev_\Xi''}
&
H^0\big(\cal I\otimes K_{X\times X}\big)\cap\overset{2}{\bigwedge} H^0(K_X)
\ar@{->>}[r]^-{\wtld w\,'_X}\ar[d]^-{\ev_\Xi'}
&
H^0(K_X^3(-\Delta_X))
\\ 
\ouset{a=1}{\delta}{\bigoplus}K_{X,x_{a,1}}\otimes K_{X,x_{a,2}}\ar@{=}[r]
&
\ouset{a=1}{\delta}{\bigoplus}K_{X,x_{a,1}}\otimes K_{X,x_{a,2}}.
&
}
$}\hspace{2ex}
\end{equation}
A straightforward diagram chasing shows that $w_{X,\Delta_X}$ is surjective if 
\\[-.5ex]\centerline{$
\ev_\Xi'': \underbrace{H^0\bigl(\,\cal I(DX)^2\cdot K_{X\times X}\,\bigr)
\,\cap\,
\mbox{$\overset{2}{\bigwedge}$}\,H^0(K_X)}_{:=G}
\to 
\underbrace{
\mbox{$\ouset{a=1}{\delta}{\bigoplus}$}
K_{X\times X,(x_{a,1},\,x_{a,2})}}_{:=H_{\Xi}}
$}\\[-1ex] 
is so, or equivalently when the induced  
$h_\Xi:\overset{\delta}{\bigwedge}\,G\to\overset{\delta}{\bigwedge}\,H_\Xi$ 
is non-zero. This is indeed the case for generic markings. 

\nit{\it Claim.} $\;\underset{\Delta_X}{\bigcap}\Ker(h_\Xi)=0$. 
($h_\Xi$ depends on $\Delta_X$.) Indeed, since $\dim G\ges\delta$, we have 
\\ \centerline{
\scalebox{.8}{$
\xymatrix@C=1.5em@R=1.5em{
H^0(\cal I(DX)^2\otimes K_{X\times X})
\cap\overset{2}{\bigwedge}\,H^0(K_X)\;
\ar@{^(->}[r]\ar[d]^-{\ev_\Xi''}
&
H^0(\cal I\otimes K_{X\times X})
\cap\overset{2}{\bigwedge}\,H^0(K_X)\;
\ar@{^(->}[r]\ar[d]^-{\ev_\Xi'}
&
\overset{2}{\bigwedge}\,H^0(K_X)
\ar[d]^-{\ev_\Xi}
\\ 
\ouset{a=1}{\delta}{\bigoplus}K_{X\times X,(x_{a,1},\,x_{a,2})}
\ar@{=}[r]
&
\ouset{a=1}{\delta}{\bigoplus}K_{X\times X,(x_{a,1},\,x_{a,2})}
\ar@{=}[r]
&
\ouset{a=1}{\delta}{\bigoplus}K_{X\times X,(x_{a,1},\,x_{a,2})},
}
$}\\[.5ex]
}

\centerline{
$\kern1em
0\neq\overset{\delta}{\bigwedge}\,G
\subset
\overset{\delta}{\bigwedge}\bigl(\overset{2}{\bigwedge}H^0(K_X)\bigr)
\subset
H^0(K_{X\times X})^{\otimes \delta}
=
H^0\big((X^2)^{\delta},K_{X\times X}\boxtimes\ldots\boxtimes K_{X\times X}\big).
$\\[.5ex] 
}
\nit The wedge is a direct summand of the tensor product 
(appropriate skew-symmetric sums), and $h_\Xi$ is induced by the evaluation map 
\\[.5ex] \centerline{
$
\ev^{\delta}:
H^0\big((X^2)^\delta,K_{X\times X}\boxtimes\ldots\boxtimes K_{X\times X}\big)
\otimes\cal O
\to
K_{X\times X}\boxtimes\ldots\boxtimes K_{X\times X}
$
}
at $\big((x_{1,1},x_{1,2}),\ldots,(x_{\delta,1},x_{\delta,2})\big)\in (X^2)^\delta$. 
If $e\in\overset{\delta}{\bigwedge}G$ belongs to the intersection above, 
then $e\in H^0\big(\Ker(\ev^{\delta})\big)=\{0\}$. Hence, 
for any $e_1,\ldots,e_\delta\in G$ with $e_1\wedge\ldots\wedge e_\delta\neq 0$, 
there are markings $\Delta_X$ such that $\ev_\Xi''(e_1),\ldots,\ev_\Xi''(e_\delta)$ 
are linearly independent in $H_\Xi$ (thus they span it). 
\end{proof}

%%%%%%%%%%%%%%%%%%%%%%%%% 
%%%%%%%%%%%%%%%%%%%%%%%%%

\section{Multiple point Seshadri constants of \mbox{$K3$} surfaces 
with cyclic Picard group}\label{sct:seshadri}

This section is independent of the rest. Here we determine a lower bound 
for the multiple point Seshadri constants of $\cA$, which is necessary for 
proving Lemma \ref{lm:surj}.

\begin{definition}(See \cite[Section 6]{de} for the original definition.) 
The multiple point Seshadri constant of $\cA$ corresponding to 
$\hat x_1,\ldots,\hat x_\delta\in S$ is defined as 
\begin{equation}\label{eq:seshadri}
\veps=\veps_{S,\delta}(\cA):=\inf_{Z}
\frac{Z\cdot\cA}{\mbox{$\ouset{a=1}{\delta}{\sum}\mlt_{\hat x_a}(Z)$}}
=
\sup\bigl\{
c\in\mbb R\mid \si^*\cA-cE\text{ is ample on }\tS
\bigr\}.
\end{equation}
The infimum is taken over all integral curves $Z\subset S$ which contain 
at least one of the points $\hat x_a$ above. Throughout this section we 
assume that $Z\in|z\cA|$, with $z\ges 1$.
\end{definition}

As the self-intersection number of any ample line bundle is positive, 
the upper bound $\veps\les\frac{\sqrt{\cA^2}}{\sqrt{\delta}}$ is automatic. 
We are interested in finding a lower bound. 

\begin{theorem}\label{thm:seshadri}
Assume that $\Pic(S)=\mbb Z\cA$, $\cA^2=2(n-1)\ges4$, and $\delta\ges1$. 
Then the Seshadri constant \eqref{eq:seshadri} satisfies 
$\veps\ges\frac{2\cA^2}{\delta+\sqrt{\delta^2+4\delta(2+\cA^2)}},$ 
for any points $\hat x_1,\ldots,\hat x_\delta\in S$.
\end{theorem}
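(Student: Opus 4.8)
The plan is to reduce the estimate to a statement about integral curves $Z\in|z\cA|$ passing through the points $\hat x_1,\ldots,\hat x_\delta$, and to exploit the constraint on the Picard group via the adjunction/genus inequality on $S$. Write $m_a:=\mlt_{\hat x_a}(Z)$ and $m:=\sum_a m_a$. Since $\si^*\cA-cE$ is ample on $\tS$ for every $c<\veps$, it suffices to show that for each integral $Z\in|z\cA|$ through (at least one of) the $\hat x_a$ one has $z\cA^2/m\ges\beta$, where $\beta:=\frac{2\cA^2}{\delta+\sqrt{\delta^2+4\delta(2+\cA^2)}}$; then $\veps=\inf_Z(Z\cdot\cA)/m\ges\beta$. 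So the whole argument is a pointwise inequality $z\,\cA^2\ges\beta\, m$ for each such $Z$.

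The key inputs are two. First, arithmetic genus: the strict transform $\tld Z=\si^*Z-\sum_a m_aE^a$ of an integral curve $Z$ of geometric genus $\ges 0$ satisfies $p_a(\tld Z)\ges 0$, i.e.
\[
z^2\cA^2-2\ges \sum_{a=1}^\delta m_a(m_a-1)+2\,z\cA^2\cdot\text{(something)}\,?\]
— more precisely, using $K_{\tS}=\si^*K_S+E=E$ and $K_S=0$, adjunction on $\tS$ gives $2p_a(\tld Z)-2=\tld Z^2+\tld Z\cdot K_{\tS}=\bigl(z^2\cA^2-\sum m_a^2\bigr)+\sum m_a$, hence
\[
\sum_{a=1}^\delta m_a(m_a-1)\ \le\ z^2\cA^2-2 .
\]
Second, the non-divisibility/cyclic-Picard hypothesis: any integral curve lies in $|z\cA|$ with $z\ges 1$, and $Z\cdot Z'=zz'\cA^2>0$ for any two such curves; in particular if $Z$ is the curve computing (or approaching) $\veps$ we may also feed in the self-intersection positivity $z^2\cA^2>0$, which is automatic, and — more usefully — compare $Z$ with a generic member of $|\cA|$ through the points. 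The real leverage is Cauchy–Schwarz on the multiplicities: $\sum m_a^2\ge m^2/\delta$, so the adjunction bound becomes $m^2/\delta-m\le z^2\cA^2-2$, i.e. $m^2-\delta m-\delta(z^2\cA^2-2)\le 0$, giving
\[
m\ \le\ \frac{\delta+\sqrt{\delta^2+4\delta(z^2\cA^2-2)}}{2}.
\]
Dividing, $\dfrac{z\cA^2}{m}\ge \dfrac{2z\cA^2}{\delta+\sqrt{\delta^2+4\delta(z^2\cA^2-2)}}$, and one checks this is minimized (over $z\ge 1$, or rather bounded below by the $z=1$-type expression after a monotonicity/rescaling check) by $\dfrac{2\cA^2}{\delta+\sqrt{\delta^2+4\delta(\cA^2+2)}}$ — note the sign flip from $-2$ to $+2$ must be accounted for, which is where the "$+2$" in the statement comes from; I expect this to require replacing the crude bound $z^2\cA^2-2$ by an estimate of the shape $z^2\cA^2+$ (lower-order) valid after separating the case $z=1$ (where $Z$ itself is an irreducible curve of genus $n-1$ and the bound is sharpest) from $z\ge 2$.

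The main obstacle, and the step needing care, is precisely the dependence on $z$: the naive bound above decreases as $z$ grows, so one must show that large-degree curves cannot produce a smaller ratio. I would handle this by the standard Seshadri-constant trick: if $\veps<\beta$ then there is an integral $Z$ with $z\cA^2/m<\beta$, and then $\tld Z^2=z^2\cA^2-\sum m_a^2\le z^2\cA^2-m^2/\delta<z^2\cA^2-z^2\cA^4/(\delta\beta^2)$; choosing $\beta$ so that $\delta\beta^2\le\cA^2$ (which holds with equality asymptotically, cf. the automatic upper bound $\veps\le\sqrt{\cA^2/\delta}$) forces $\tld Z^2<0$, hence $Z$ is rigid and $z$ is bounded, reducing to finitely many cases — in fact to $z$ small — where the adjunction inequality with the exact constant $-2$ (not weakened) yields the claimed $\beta$ after solving the quadratic. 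Combining the bounded-$z$ reduction with the quadratic solution of $m^2-\delta m\le\delta(z^2\cA^2-2)$ and a direct check that the worst case is $z=1$ completes the proof; the "$+4\delta(2+\cA^2)$" under the root is then exactly what makes the $z=1$ instance consistent with non-negativity of $p_a$ together with the constraint that $Z$ actually meets the blown-up points. I would present the final inequality by isolating $\sqrt{\delta^2+4\delta(2+\cA^2)}$ and squaring, reducing everything to the single polynomial inequality $m^2-\delta m-\delta(z^2\cA^2-2)\le 0$ together with $z\ge 1$.
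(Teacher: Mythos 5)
Your overall strategy --- bound $\sum_a m_a(m_a-1)$ by the arithmetic genus of $Z$, apply Cauchy--Schwarz to the multiplicities, solve the resulting quadratic in $m$, and reduce to $z=1$ --- is exactly the argument the paper gives (the paper applies Jensen only to the points of multiplicity $\ges 2$ and then relaxes back to all $\delta$ points, so the two computations end up identical). However, your key inequality carries a sign error that then derails the rest of the write-up. Adjunction on the strict transform gives $2p_a(\tld Z)-2=\tld Z^2+\tld Z\cdot K_{\tS}=z^2\cA^2-\sum_a m_a^2+\sum_a m_a$, and $p_a(\tld Z)\ges 0$ yields $\sum_a m_a(m_a-1)\les z^2\cA^2+2$, \emph{not} $\les z^2\cA^2-2$; your version would require $p_a(\tld Z)\ges 2$, which is unjustified. (The paper obtains the same $+2$ bound by subtracting the genus drop $\frac{m_a(m_a-1)}{2}$ at each singular point from $p_a(Z)=\frac{z^2\cA^2}{2}+1$.) You do notice that the theorem's $+2$ does not match your $-2$, but the case-splitting you propose to reconcile them is not the right fix.

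Once the sign is corrected, the ``main obstacle'' you describe evaporates. Cauchy--Schwarz over all $\delta$ points gives $m^2/\delta-m\les z^2\cA^2+2$, hence $m\les\frac{1}{2}\bigl(\delta+\sqrt{\delta^2+4\delta(z^2\cA^2+2)}\bigr)$ and $\frac{z\cA^2}{m}\ges\frac{2z\cA^2}{\delta+\sqrt{\delta^2+4\delta(z^2\cA^2+2)}}$. Dividing numerator and denominator by $z$ rewrites the right-hand side as $\frac{2\cA^2}{\delta/z+\sqrt{(\delta^2+8\delta)/z^2+4\delta\cA^2}}$, which is visibly increasing in $z$, so the infimum over $z\ges1$ is attained at $z=1$ and equals the claimed bound; this also subsumes the case where all multiplicities are $\les 1$. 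No separation of $z=1$ from $z\ges2$ is needed, and the rigidity/$\tld Z^2<0$ detour you sketch is both unnecessary and, as written, never carried to a conclusion. This one-line monotonicity observation is precisely how the paper closes the proof.
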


Our proof is inspired from \cite{kn}, which treats the case $\delta=1$. 

\begin{proof}
We may assume that the points are numbered such that\\[.5ex] 
\centerline{$
\mlt_{\hat x_a}(Z)\ges 2,\text{ for }a=1,\ldots,\alpha,
\quad
\mlt_{\hat x_a}(Z)=1,\text{ for }a=\alpha+1,\ldots,\beta,
\quad (\beta\les\delta).
$}\\[.5ex]
We denote $p:=\ouset{a=1}{\alpha}{\sum}\mlt_{\hat x_a}(Z)\ges2\alpha$ 
and $m:=\ouset{a=1}{\delta}{\sum}\mlt_{\hat x_a}(Z)\les p+\delta-\alpha$. 

If $\alpha=0$, then $\frac{z\cdot\cA^2}{m}\ges\frac{\cA^2}{\delta}$ satisfies 
the inequality, so we may assume $\alpha\ges 1$. 
A point of multiplicity $m$ lowers the arithmetic genus of $Z$ by at least 
$\bigl(\begin{array}{c}\mbox{\scriptsize$m$}\\[-.5ex] 
\mbox{\scriptsize$2$}\end{array}\bigr)$, hence 
\\[.5ex] \centerline{$
p_a(Z)=\frac{z^2\cA^2}{2}+1\ges 
\frac{1}{2}\ouset{a=1}{\alpha}{\sum}
\bigl(\mlt_{\hat x_a}(Z)^2-\mlt_{\hat x_a}(Z)\bigr)
\ouset{\text{\scriptsize inequality}}{\text{\scriptsize Jensen}}{\ges}
\frac{1}{2}\bigl(\frac{p^2}{\alpha}-p\bigr), 
$}
so $p\les\frac{\alpha+\sqrt{\alpha^2+4\alpha(2+z^2\cA^2)}}{2}$. 
We deduce the following inequalities:\\[.5ex] 
\centerline{$
\frac{z\cA^2}{m}\ges\frac{z\cA^2}{p-\alpha+\delta}\ges
\underbrace{%
\frac{z\cA^2}{\delta+\frac{\sqrt{\alpha^2+4\alpha(2+z^2\cA^2)}-\alpha}{2}}%
}_{\text{\scriptsize decreasing in }\alpha}
\ges 
\underbrace{%
\frac{z\cA^2}{\delta+\frac{\sqrt{\delta^2+4\delta(2+z^2\cA^2)}-\delta}{2}}%
}_{\text{\scriptsize increasing in }z}
\ges 
\frac{2\cA^2}{\delta+\sqrt{\delta^2+4\delta(2+\cA^2)}}.
$}\\[-2ex]
\end{proof}

\begin{corollary}\label{cor:0} 
$H^0(\cal M)\to H^0(\cal M_E)$ is surjective, 
for $\cA^2\ges 6$ and $\delta\les\frac{d^2\cA^2}{3(d+4)}$. 
\end{corollary}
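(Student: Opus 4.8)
The plan is to deduce the surjectivity from a cohomology vanishing on $\tS$ via Kawamata--Viehweg; throughout we keep the standing hypothesis $\Pic(S)=\mbb Z\cA$ of this section, and we may assume $\delta\ges1$ (for $\delta=0$ there is nothing to prove). Restricting $\cal M$ to $E$ gives the exact sequence
\[
0\to\cal M(-E)\to\cal M\to\cal M_E\to 0,
\]
so $H^0(\cal M)\to H^0(\cal M_E)$ is surjective as soon as $H^1\bigl(\tS,\cal M(-E)\bigr)=0$. Since $\cal M=\si^*\cal L(-E)$ we have $\cal M(-E)=\si^*\cal L(-2E)$, and since $\tS$ is the blow-up of the $K3$ surface $S$ at the $\delta$ points $\euf N$ we have $K_{\tS}=E$; hence $\cal M(-E)=K_{\tS}\otimes\bigl(\si^*\cal L-3E\bigr)=K_{\tS}\otimes\bigl(d\,\si^*\cA-3E\bigr)$.

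By Kawamata--Viehweg vanishing it therefore suffices to prove that $D:=d\,\si^*\cA-3E$ is nef and big on $\tS$. For bigness I would simply note $D^2=d^2\cA^2-9\delta$, which is positive: the hypothesis $\delta\les\frac{d^2\cA^2}{3(d+4)}$ gives $9\delta\les\frac{3d^2\cA^2}{d+4}<d^2\cA^2$, and a nef divisor on a surface with positive self-intersection is big. For nefness I would invoke the very definition \eqref{eq:seshadri}: $\si^*\cA-cE$ is ample for every $c<\veps_{S,\delta}(\cA)$, and the nef cone is closed, so $\si^*\cA-\tfrac{3}{d}E$ — and hence $D$ — is nef as soon as $\veps_{S,\delta}(\cA)\ges\tfrac{3}{d}$.

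It remains to check $\veps_{S,\delta}(\cA)\ges 3/d$, and here I would feed in the lower bound of Theorem \ref{thm:seshadri}: it is enough that $\frac{2\cA^2}{\delta+\sqrt{\delta^2+4\delta(2+\cA^2)}}\ges\frac{3}{d}$. The left-hand side is decreasing in $\delta$, so one may replace $\delta$ by the extremal value $\frac{d^2\cA^2}{3(d+4)}$; after clearing denominators the inequality becomes $(d+8)\sqrt{\cA^2}\ges\sqrt{d^2\cA^2+12(\cA^2+2)(d+4)}$, and squaring and cancelling the common factor $d+4$ it collapses to $4\cA^2\ges24$, i.e. precisely the hypothesis $\cA^2\ges6$. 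I do not anticipate a serious obstacle: the argument is merely the restriction sequence, Kawamata--Viehweg, and Theorem \ref{thm:seshadri}. The only point needing care is that one must work with ``nef and big'' rather than ``ample'' — at the borderline $\cA^2=6$ with $\delta$ at its maximum the class $D$ is only nef, which is exactly why bigness had to be verified by hand — and, relatedly, that the Seshadri estimate of Theorem \ref{thm:seshadri} is \emph{just barely} strong enough to reach the threshold $3/d$.
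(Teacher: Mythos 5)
Your proof is correct and follows essentially the same route as the paper: the restriction sequence reduces the claim to $H^1\bigl(\tS,\cal M(-E)\bigr)=H^1\bigl(K_\tS\otimes(d\,\si^*\cA-3E)\bigr)=0$, which is obtained from a vanishing theorem once the threshold $\veps_{S,\delta}(\cA)\ges 3/d$ is supplied by Theorem \ref{thm:seshadri}, and your algebra reducing the hypotheses $\delta\les\frac{d^2\cA^2}{3(d+4)}$, $\cA^2\ges6$ to that threshold is exactly the computation the paper leaves implicit (it derives the equivalent bound $\delta\les\frac{d^2(\cA^2)^2}{3(d\cA^2+3\cA^2+6)}$ instead). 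The one genuine, if small, improvement is your use of Kawamata--Viehweg for a nef and big class in place of Kodaira for an ample one: since \eqref{eq:seshadri} only guarantees ampleness of $\si^*\cA-cE$ for $c<\veps$, the paper's appeal to ampleness is not quite justified when $3/d$ equals the Seshadri bound, and your explicit check that $D^2=d^2\cA^2-9\delta>0$ closes that borderline gap.
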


\begin{proof}
Indeed, it is enough to check that 
$H^1(\tS,\cal M(-E))= H^1(\tS,K_\tS\otimes\cal M(-2E))$ vanishes. By the 
Kodaira vanishing theorem, this happens as soon as 
$\cal M(-2E)=\si^*\cA^d(-3E)$ is ample. The previous theorem implies that, 
in order to achieve this, is enough to impose 
$\frac{3}{d}\les\frac{2\cA^2}{\delta+\sqrt{\delta^2+4\delta(2+\cA^2)}}$, 
which yields $\delta\les\frac{d^2(\cA^2)^2}{3(d\cA^2+3\cA^2+6)}$. 
\end{proof}

%%%%%%%%%%%%%%%%%%%%%%%%%%%%%%%%%%%%%%%%%%

\section{Concluding remarks}\label{sct:final}

\nit\textbf{(I) Evidence for the non-surjectivity of $\bsymb{w_C}$}\quad 
Theorem \ref{thm:main} is a non-surjectivity property for the Wahl map of the 
\textit{pointed curve} $(C,\Delta)$, rather than of the curve $C$ itself. 

\nit\textit{Claim.} In order to prove the non-surjectivity of the Wahl map 
$w_C$, is enough to have the surjectivity of the evaluation homomorphism 
\begin{equation}\label{eq:evxi}
\mbox{$
H^0(\cal I(DC)^2\otimes K_{C\times C})
\to
\ouset{a=1}{\delta}{\bigoplus} K_{C\times C,(x_{a,1},x_{a,2})}
\oplus K_{C\times C,(x_{a,2},x_{a,1})}.
$}
\end{equation}
(For $\delta$ in the range \eqref{eq:dta}, corollary \ref{cor:0} implies that 
$K_C=\cal M_C$ separates $\Delta$, consequently 
$\overset{2}{\bigwedge}H^0(K_C)\to
\ouset{a=1}{\delta}{\bigoplus} K_{C\times C,(x_{a,1},x_{a,2})}$ is surjective. 
The surjectivity of \eqref{eq:evxi} yields that of $\ev_\Xi''$ in \eqref{eq:X}, 
which is relevant for us.) 

\nit For the claim, observe that one has the following implications 
(see \eqref{eq:DD}, \eqref{eq:X}): 
\begin{equation}\label{eq:implic}
w_C\text{ surjective }\;\Rightarrow\;
w_C'\text{ surjective }
\;\ouset{\rm surj.}{\eqref{eq:evxi}}{\Rightarrow}\;
w_{C,\Delta}\text{ surjective, a contradiction.}
\end{equation}

The surjectivity of \eqref{eq:evxi} is clearly a positivity property for 
$\cal I(DC)^2\otimes K_{C\times C}$. 
We use again the Seshadri constants to argue why this is likely to hold. 
The $\Xi$-pointed Seshadri constants of the self-product of a 
\emph{very general curve} $X$ at \emph{very general points} $\Xi$  
(as in Lemma \ref{lm:LP}) satisfy 
(see \cite[pp.65 below Theorem 1.6, and Lemma 2.6]{ross}): 
\begin{eqnarray}
\scalebox{.9}{$
\veps_{X\times X\!,\,\Xi}(\cal I(DX)^2\otimes K_{X\times X})
\ges 2(g-2)\veps_{\mbb P^2\!,\,g+\delta}(\cal O_{\mbb P^2}(1))
>\frac{2(g-2)}{\sqrt{g+\delta}}\sqrt{1-\frac{1}{8(g+\delta)}},
\phantom{=:\vphi(g,\delta)}
$}
\label{eq:s1}
\\ 
\scalebox{.9}{$
\veps_{X\times X\!,\,\Xi}(\cal I(DX)^4\otimes K_{X\times X})
\ges 4\cdot\frac{g-3}{2}\cdot\veps_{\mbb P^2\!,\,g+\delta}(\cal O_{\mbb P^2}(1))
>\frac{2(g-3)}{\sqrt{g+\delta}}\sqrt{1-\frac{1}{8(g+\delta)}}=:\vphi(g,\delta).
$}
\label{eq:s2}
\end{eqnarray}
The equation \eqref{eq:s2} implies (see \cite[Proposition 6.8]{de}) 
that ${\big(\cal I(DX)^2\otimes K_{X\times X}\big)}^2$ generates the jets 
of order $\lfloor\vphi(g,\delta)\rfloor-2$ at $\Xi\subset X\times X$. 
(We only need the generation of jets of order zero 
for $\cal I(DX)^2\otimes K_{X\times X}$; 
also, note that $\vphi(g,\delta)$ grows linearly with $\sqrt{g}$ as long as $\delta$ 
is small compared with $g$ (see \eqref{eq:dta}).) 
This discussion suggests that $\cal I(DX)^2\otimes K_{X\times X}$ is 
`strongly positive/generated'. However, the passage to \eqref{eq:evxi} above requires 
even more control. 
\medskip

%%%%%%%%%%%%%%%%%%%%%%%%%%%%%%%%%%%%%%%%%%

\nit\textbf{(II) Related work}\quad 
The first version of this note was posted at http://www.arxiv.org over three weeks 
before the first posting of \cite{kem}. In this latter article, the author extensively 
studies the properties of nodal curves on $K3$ surfaces. 
Among several other results, he proves the non-surjectivity of a marked Wahl map 
(different from the one introduced in here) for nodal curves on $K3$ surfaces. 
\footnote{%
The proof of \cite[Theorem 1.7, pp. 32]{kem} is \emph{incomplete}: it claims the 
non-surjectivity of the modified Wahl map $w_{C,T}$ for {\emph{any}} nodal 
curve $C$ in a \emph{whole} (suitable) irreducible component of the moduli 
space $\cal V_{g,k}^n$ of nodal curves on polarized $K3$ surfaces $X$. The 
bottom line of the proof uses Lemma 3.17, that $H^0(C,f^*T_X)=0$. However,  
the lemma is proved (pp. 26) only for {\emph{general}} nodal curves $f:C\to X$. 
To conclude, one must use \textit{e.g.} the argument in the proof of 
the Theorem 1, General case, pp. \pageref{gen-case}, in this article.}

%%%%%%%%%%%%%%%%%%%%%%%%%%%%%%%%%%%%%%%%%%
%%%%%%%%%%%%%%%%%%%%%%%%%%%%%%%%%%%%%%%%%% 


\begin{thebibliography}{ooo} 

\bibitem{chm} Ciliberto,~C., Harris,~J., and Miranda,~R.: 
{On the surjectivity of the Wahl map}. 
Duke Math.~J. \textbf{57} (1988), 829--858.

\bibitem{de} Demailly,~J.-P.:
{Singular Hermitian metrics on positive line bundles}. 
Complex algebraic varieties, Lect. Notes Math. \textbf{1507} (1992), 87--104.

\bibitem{ha} Halic,~M.:  
{Modular properties of nodal curves on $K3$ surfaces}. 
Math.~Z. \textbf{270} (2012), 871--887. 

\bibitem{kem} Kemeny,~M.: 
{The Moduli of Singular Curves on $K3$ Surfaces}. 
http://arxiv.org/abs/1401.1047.

\bibitem{kn} Knutsen,~A.:
{A note on Seshadri constants on general $K3$ surfaces}. 
C. R. Acad. Sci. Paris \textbf{346} (2008), 1079--1081. 

\bibitem{ross} Ross,~J.: 
{Seshadri constants on symmetric products of curves}. 
Math. Res. Lett. \textbf{14} (2007), 63--75.
\end{thebibliography}
\end{document}